\numberwithin{equation}{section} 
\newtheorem{theorem}{Theorem}[section] 
\newtheorem{proposition}[theorem]{Proposition} 
\newtheorem{corollary}[theorem]{Corollary} 
\newtheorem{defn}[theorem]{Definition}
\theoremstyle{definition}
\newtheorem{example}[theorem]{Example} 
\newtheorem{remark}[theorem]{Remark} 
\newenvironment{ex}{\begin{example}\rm}{\end{example}}
\newcommand{\CC}{\mathbb C}
\newcommand{\RR}{\mathbb R}
\newcommand{\PP}{\mathbb P}
\newcommand{\AAA}{\mathbb A}
\newcommand{\Osh}{\mathcal O}
\title{The Number of Eigenvalues of a Tensor}
\author{Dustin Cartwright and  Bernd Sturmfels}
\address{
Department of Mathematics\\
      University of California\\
      Berkeley, CA 94720,
      USA}
\email{\{dustin,bernd\}@math.berkeley.edu}
\begin{document}

\begin{abstract}
Eigenvectors of tensors, as studied recently in numerical multilinear algebra,
  correspond to  fixed points of self-maps of a projective space.
We determine the number of eigenvectors
and eigenvalues of a generic tensor, and
we show that 
the number of normalized eigenvalues of a symmetric tensor is always finite.
We also examine the characteristic polynomial and how 
its coefficients are related to
discriminants and resultants.
\end{abstract}

\maketitle

\section{Introduction}

In the current numerical analysis literature, considerable interest has arisen in
extending concepts that are familiar from linear algebra to the setting of 
multilinear algebra. One such familiar concept is that of an eigenvalue
of a square matrix. Several authors have explored definitions of eigenvalues
and eigenvectors for higher-dimensional tensors, and they have argued that these notions are
useful for wide range of applications \cite{Lim}.  Our approach in this
paper is based on the definition of 
{\em E-eigenvalues} of tensors introduced by Liqun Qi in \cite{NQWW,Qi}. 
Throughout this paper, eigenvalue will mean E-eigenvalue, as defined in
Definition~\ref{def:eig}.

We fix two positive integers $m$ and $n$, and we consider 
order-$m$ tensors $A = (a_{i_1 i_2 \cdots i_m})$ 
of format $\,n \times n \times
\cdots \times n\,$ with entries in the field of complex numbers $\CC$.

\begin{defn} \label{def:eig} \rm
Let $x$ be in $\CC^n$ and $A$ a tensor as above. Adopting the notation
introduced in \cite{NQWW, Qi},
we define $Ax^{m-1}$ to be the vector in $\CC^n$ whose
$j$-th coordinate is the scalar
\begin{equation}
\label{eq:Am}
(Ax^{m-1})_{j} \quad = \quad \sum_{i_2 = 1}^n \cdots \sum_{i_m=1}^n a_{j i_2 \cdots i_m} x_{i_2}\cdots x_{i_m}.
\end{equation}
If $\lambda$ is a complex number and $x \in \CC^n$ a non-zero vector
such that $A x^{m-1} = \lambda x$, then $\lambda$ is an
\emph{eigenvalue} of~$A$ and $x$ is an \emph{eigenvector} of $A$. We will refer to the
pair of $\lambda$ and~$x$ as an \emph{eigenpair}.
Two eigenpairs $(\lambda,x)$ and $(\lambda', x')$ of the same tensor $A$
are considered to be {\em equivalent}
if there exists a complex number $t \neq 0$ such that $t^{m-2}\lambda = \lambda'$ and $t
x = x'$.
\end{defn}

If $m=2$ then $Ax^1$ is the ordinary matrix-vector product, and Definition \ref{def:eig}
recovers the familiar eigenvalues and eigenvectors of a square matrix $A$.
 In that case, our notion of equivalence
amounts to rescaling the eigenvector, but the eigenvalue is uniquely determined.

For $m \geq 3$, Qi \cite{Qi} normalizes the
eigenvectors $x$ of $A$ by additionally requiring $x \cdot x = 1$.
When $x \cdot x$ is non-zero, this has the effect of choosing two distinguished
representatives, related by $\lambda' = (-1)^m \lambda$ and $x' = -x$, from each
equivalence class. In particular, when $m$ is even, the eigenvalue is uniquely
determined, and when $m$ is odd, it is determined up to sign. However,
since equivalence classes with $x \cdot x = 0$ are not allowed by Qi's
normalization, his definition does not strictly generalize the classical
eigenvalues of a matrix. We will call eigenvalues $\lambda$ with an eigenvector 
satisfying $x \cdot x
= 1$ \emph{normalized eigenvalues} of the tensor~$A$.

In Section~6 of~\cite{Qi}, Qi considers
an alternative normalization by requiring $x \cdot
\overline x = 1$, where $\overline x$ is the complex conjugate. This reduces the
equivalence classes from two real dimensions to one real dimension. One can
still get an equivalent eigenpair $(t^{m-2}\lambda, t x)$ for
any complex number $t$ with unit modulus.
Yet another normalization, based on the $p$-norm over the real numbers $\RR$,
was introduced by Lek-Heng Lim in his variational approach \cite{Lim}.

For most of this paper, we prefer not to choose any normalization whatsoever.
Instead, we depend on the notion of equivalence in
Definition~\ref{def:eig} in order to have a finite number of equivalence classes
of eigenpairs in the generic case. This equivalence is a generalization of the
usual ambiguity of eigenvectors of an $n {\times} n$-matrix $A$, which at best,
are only unique up to scaling.  The following theorem generalizes, from $m=2$ to
$m \geq 3$, the familiar linear algebra fact that an $n {\times} n$-matrix $A$
has precisely $n$ eigenvalues over the complex numbers.

\begin{theorem}\label{thm:count}
If a tensor $A$ has finitely many equivalence classes of eigenpairs over~$\CC$
then their number, counted with multiplicity, is equal to $\,((m-1)^n - 1)/(m-2)$.
  If the entries of $A$ are sufficiently generic,
then all multiplicities are equal to $1$, so there are exactly $\,((m-1)^n -
1)/(m-2)\,$ equivalence classes of eigenpairs.
\end{theorem}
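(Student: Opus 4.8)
The plan is to reinterpret the equivalence classes of eigenpairs as the points of a zero-dimensional subscheme of $\PP^{n-1}$ and then apply Chern-class intersection theory. Write $F_j = (Ax^{m-1})_j$ for $j=1,\dots,n$; each $F_j$ is a form of degree $m-1$ in the coordinates $x_1,\dots,x_n$ on $\PP^{n-1}$, and as $A$ ranges over all tensors the tuple $(F_1,\dots,F_n)$ ranges over all of $H^0(\PP^{n-1},\Osh(m-1))^{\oplus n}$. First I would observe that a point $[x]\in\PP^{n-1}$ represents an equivalence class of eigenpairs exactly when $(F_1(x),\dots,F_n(x))$ is a scalar multiple of $(x_1,\dots,x_n)$, the scalar $0$ being allowed (it forces $\lambda=0$): once a representative $x$ of $[x]$ is chosen the eigenvalue $\lambda$ is uniquely determined, and replacing $x$ by $tx$ replaces $\lambda$ by $t^{m-2}\lambda$, which is precisely the equivalence of Definition~\ref{def:eig}; in particular two inequivalent eigenpairs can never share the same $[x]$. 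Thus the set of equivalence classes is the support of the subscheme $Z_A\subseteq\PP^{n-1}$ cut out by the $2\times 2$ minors $F_ix_j-F_jx_i$ of the $2\times n$ matrix with rows $(F_1,\dots,F_n)$ and $(x_1,\dots,x_n)$.

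Since the $x_i$ have no common zero, that matrix always has rank at least $1$, so $Z_A$ is the locus where it has rank exactly $1$; equivalently, $Z_A$ is the zero scheme of the section $g_A$ of the rank-$(n-1)$ bundle $\mathcal{H}om\bigl(\Omega^1_{\PP^{n-1}}(1),\Osh(m-1)\bigr)\cong T_{\PP^{n-1}}(m-2)$ obtained by composing the inclusion $\Omega^1_{\PP^{n-1}}(1)=\ker\bigl(\Osh^{\oplus n}\to\Osh(1)\bigr)\hookrightarrow\Osh^{\oplus n}$ with the map $\Osh^{\oplus n}\to\Osh(m-1)$, $e_j\mapsto F_j$. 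If $A$ has only finitely many equivalence classes of eigenpairs then $Z_A$ is zero-dimensional; it is moreover nonempty (a bundle with nonzero top Chern number has no nowhere-vanishing section, and the Chern number below is positive), hence of codimension equal to the rank $n-1$ of the bundle, hence locally a complete intersection. Then the Koszul complex of $g_A$ resolves $\Osh_{Z_A}$, so $[Z_A]=c_{n-1}\bigl(T_{\PP^{n-1}}(m-2)\bigr)\cap[\PP^{n-1}]$ in the Chow ring and the length of $Z_A$ --- the meaning of ``counted with multiplicity'' --- equals $\deg c_{n-1}\bigl(T_{\PP^{n-1}}(m-2)\bigr)$.

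It remains to evaluate this Chern number and to prove the genericity claim. The Euler sequence gives $c\bigl(T_{\PP^{n-1}}(m-2)\bigr)=(1+(m-1)h)^n/(1+(m-2)h)$, where $h$ is the hyperplane class; writing $1+(m-2)h=(1+(m-1)h)-h$ turns the right side into $\sum_{k\ge 0}h^k(1+(m-1)h)^{\,n-1-k}$, and its coefficient of $h^{n-1}$ is $\sum_{k=0}^{n-1}(m-1)^{\,n-1-k}=\bigl((m-1)^n-1\bigr)/(m-2)$, which is positive as claimed above. For the genericity statement, the assignment $A\mapsto g_A$ is linear, and taking cohomology of the twisted Euler sequence $0\to\Osh(m-2)\to\Osh(m-1)^{\oplus n}\to T_{\PP^{n-1}}(m-2)\to 0$ together with the vanishing $H^1(\PP^{n-1},\Osh(m-2))=0$ shows it is surjective onto $H^0\bigl(\PP^{n-1},T_{\PP^{n-1}}(m-2)\bigr)$. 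Since $T_{\PP^{n-1}}(m-2)$ is globally generated for $m\ge 2$, the Bertini-type theorem for zero loci of sections of globally generated bundles shows that a general section --- hence the section coming from a general tensor --- vanishes at exactly $\deg c_{n-1}=\bigl((m-1)^n-1\bigr)/(m-2)$ reduced points. This gives the last sentence of the theorem.

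I expect the main obstacle to be the bookkeeping in the second paragraph: checking that $Z_A$ is genuinely the zero scheme (not merely the support of the zero scheme) of the section $g_A$ of the correctly identified bundle, and that finiteness of $Z_A$ forces it to have the expected codimension $n-1$, so that no excess-intersection term appears and the length of $Z_A$ is exactly the top Chern number. Once that is in place, the Chern-class computation and the Bertini argument are routine.
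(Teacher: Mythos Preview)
Your argument is correct, and it is genuinely different from the paper's. The paper keeps $\lambda$ as a coordinate and works in the weighted projective space $X=\PP(1,\ldots,1,m-2)$: the $n$ equations $Ax^{m-1}=\lambda x$ become $n$ divisors in the class of $\Osh_X(m-1)$, toric intersection theory gives $D^n=(m-1)^n/(m-2)$, and one then subtracts the contribution $1/(m-2)$ of the singular point $\{x=0\}$. Genericity is established not by a Bertini argument but by exhibiting an explicit diagonal tensor attaining the bound. By contrast, you eliminate $\lambda$ at the outset by observing that an equivalence class of eigenpairs is determined by $[x]\in\PP^{n-1}$, and then identify the eigenvector locus with the zero scheme of a section of $T_{\PP^{n-1}}(m-2)$, reducing everything to the single Chern number $c_{n-1}\bigl(T_{\PP^{n-1}}(m-2)\bigr)$.

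What each buys: your route avoids weighted projective spaces and rational Chow rings entirely, needs no separate treatment of the trivial solution, handles $m=2$ uniformly (the formula $\sum_{k=0}^{n-1}(m-1)^{n-1-k}$ specializes to $n$ without an indeterminate limit), and gives genericity from global generation of $T_{\PP^{n-1}}(m-2)$ rather than from an example. The paper's route keeps the eigenvalue visible throughout, which feeds directly into the characteristic-polynomial and resultant discussion that follows, and the explicit diagonal example it uses for genericity is informative in its own right. Your worry about the ``bookkeeping'' is well placed but in fact already handled: on the chart $x_i\neq 0$ the section $g_A$ is given by the $n-1$ minors $x_iF_j-x_jF_i$ ($j\neq i$), and these generate the full $2\times 2$-minor ideal there, so $Z_A$ really is the zero scheme of $g_A$; and zero-dimensionality of $Z_A$ on the $(n-1)$-dimensional $\PP^{n-1}$ is exactly codimension equal to $\operatorname{rk}T_{\PP^{n-1}}(m-2)$, so no excess-intersection correction arises.
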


For the case when the tensor order $m$ is even, the above formula was derived in
\cite[Theorem~3.4]{NQWW} by means of a detailed analysis of
the Macaulay matrix for the multivariate resultant.
For arbitrary $m$, it was stated as a conjecture in line 2 on page
1228 of \cite{NQWW}.
 We here present a short proof of this conjecture that is based on
 techniques from toric geometry \cite{CS,fulton}.
 
The rest of this paper is organized as follows. In Section~\ref{sec:count}, we
prove Theorem~\ref{thm:count}. In Section~\ref{sec:characteristic}, we
investigate the characteristic polynomial. Tensors with vanishing characteristic
polynomial are interpreted as singular tensors.
 In Section~\ref{sec:dynamics}, we
relate eigenvalues of tensors to dynamics on projective space. Finally, in
Section~\ref{sec:symmetric}, we specialize to the case of symmetric tensors.
We
show that, in that case, the set of normalized eigenvalues is always finite.

\section{Intersections in a Weighted Projective Space}\label{sec:count}

We shall formulate the problem of computing the eigenvalues
and eigenvectors of the tensor~$A$ as an intersection problem in
the $n$-dimensional weighted projective space 
\begin{equation*}
X \,\,\, = \,\,\, \PP(1,1,\ldots, 1, m-2).
\end{equation*}
The textbook definition of~$X$ can be found, for example, in \cite[\S 2.0]{CS}
and \cite[page 35]{fulton}. Points in~$X$ are represented by
vectors of complex numbers $(u_1: \cdots: u_n:
\lambda)$, not all zero, modulo the rescaling $(t u_1:\cdots: t u_n: t^{m-2}
\lambda)$ for any non-zero complex number~$t$.
The corresponding algebraic representation of our weighted projective space is
$\, X = \operatorname{Proj}(R)$, where $R = \CC[x_1, \ldots, x_n,
\lambda]$  is the polynomial ring with $x_1,\ldots,x_n$ having degree~$1$ 
and $\lambda$ having degree~$m-2$. 
The following proof uses basic toric intersection theory as in
\cite[Ch.~5]{fulton}.

\begin{proof}[Proof of Theorem \ref{thm:count}]
For $m = 2$, the expression  $((m-1)^n-1)/(m-2)$ simplifies to $ n$, which is
the number of eigenvalues of an ordinary $n {\times} n$-matrix. 
Hence we shall now assume that $\,m \geq 3$. For a fixed tensor~$A$, the
$n$ equations determined by $\,Ax^{m-1} = \lambda x\,$ correspond to $n$
homogeneous polynomials of degree $m-1$ in
our graded polynomial ring $R$.

Since $R$ is generated in degree~$m-2$, the line bundle $\mathcal O_X(m-2)$ 
is very ample. The corresponding lattice polytope~$\Delta$ is an 
$n$-dimensional simplex with vertices at $(m-2)e_i$ for $1
\leq i \leq n$, and $e_{n+1}$, where the $e_i$ are the basis vectors in
$\RR^{n+1}$. The affine hull of~$\Delta$ is the hyperplane
$\,x_1+\cdots + x_n +   (m{-}2)\lambda = m{-} 2$. The normalized volume of this
simplex equals
\begin{equation}
\label{eq:simplexvolume}
 {\rm Vol}(\Delta) \,\, =  \,\, (m-2)^{n-1} .
 \end{equation}
The lattice polytope $\Delta$ is smooth, except at the 
vertex~$e_{n+1}$, where it is simplicial with index $m-2$.
Therefore, the projective toric variety $X$ is simplicial, with
precisely one isolated singular point
corresponding to the vertex~$e_{n+1}$. 
By \cite[p.~100]{fulton}, the variety $X$ has a  rational Chow ring 
$A^*(X)_{\mathbb Q}$, which we can use to compute intersection numbers
of divisors on~$X$.

Our system of equations $Ax^{m-1} = \lambda x$ consists of $n$~polynomials of
degree $m-1$ in~$R$. Let $D$ be the divisor class corresponding to
$\Osh_X(m-1)$, and let $H$ be the very ample divisor class corresponding to
$\Osh_X(m-2)$. The volume formula (\ref{eq:simplexvolume}) is equivalent to
$\,H^n = (m-2)^{n-1}\,$ in $A^*(X)_{\mathbb Q}$, and we compute the
self-intersection number of $D$ as the following rational number:
\begin{equation*}
D^n \quad = \quad
\left(\frac{m-1}{m-2} \cdot H\right)^{\!\! n} \quad = \quad
 \left(\frac{m-1}{m-2}\right)^{\!\! n} \! \cdot (m-2)^{n-1} 
 \quad = \quad  \frac{(m-1)^n}{m-2}.
\end{equation*}

From this count we must remove the trivial solution $\{x=0\}$ of 
$\,Ax^{m-1} = \lambda x$. That solution corresponds to the 
singular point $e_{n+1}$ on~$X$. Since that point has index $m-2$, the trivial solution
counts for $1/(m-2)$ in the intersection computation, as shown in \cite[p.~100]{fulton}.
Therefore the number of non-trivial solutions in $X$ is equal to
\begin{equation}
\label{nicenumber} D^n \, - \, \frac{1}{m-2} \quad \, = \, \quad 
\frac{(m-1)^n-1}{m-2}, 
\end{equation}
Therefore, when the tensor $A$ admits only finitely many
equivalence classes of eigenpairs, then
their number, counted with multiplicities, coincides with
the positive integer in (\ref{nicenumber}).

In Example~\ref{ex:diagonal} below we exhibit a tensor $A$ which attains
the upper bound (\ref{nicenumber}). For that $A$,
each solution $(x,\lambda)$ has multiplicity $1$. It follows
that the same holds for generic $A$.
\end{proof}

\begin{remark}
An alternative presentation of our proof is to perform the substitution $\lambda =
\tilde\lambda^{m-2}$ in the equations $Ax^{m-1} = \lambda x$. This makes
the system of equations homogeneous of degree~$m-1$. B\'ezout's theorem
says that there are generically $(m-1)^n$ solutions in the projective space $\PP^n$. If we
remove the trivial solution, this leaves $(m-1)^n - 1$ solutions. Assuming that
none of these have $\tilde\lambda = 0$, the orbits formed by
multiplying $\tilde \lambda$ by $e^{2 \pi i/(m-2)}$ each yield the same
value of $\lambda = \tilde \lambda^{m-2}$ and $x$. Thus, there are
$((m - 1)^n - 1)/(m - 2)$ classes.

The delicate point in such a proof would be to argue that the solution to $A x^{m-1} = \tilde \lambda^{m-2} x$ has multiplicity $m-2$
even when $\tilde \lambda = 0$. In effect, toric geometry conveniently
does the bookkeeping in the correspondence between
solutions to $A x^{m-1} = \lambda x$ and solutions to $A x^{m-1} = \tilde
\lambda^{m-2} x$.
\end{remark}

\begin{ex}\label{ex:diagonal}
Let $A$ be the diagonal tensor of order $m$ and size $n$ defined by
setting $A_{i i \ldots i} = 1$ and all other entries zero. An eigenpair 
$(\lambda,x)$ is a solution to the equations
\begin{equation}
\label{eq:binomials}
  x_i^{m-1} \,\,=\,\, \lambda x_i  \qquad \hbox{for $\,1 \leq i \leq n$.} 
 \end{equation}
All non-trivial solutions in $X = \PP(1,\ldots,1,m-2)$
satisfy $\lambda \not= 0 $. By rescaling, we can assume that $\lambda = 1$.
Fix the root of unity $\zeta = e^{2\pi i/(m-2)}$, and let
$S = \{0, \ldots, m-3, *\}$. For any string
$\sigma$ in $S^n$ other than the all $*$s string, we define $x_i =
\zeta^{\sigma_i}$ if
$\sigma_i$ is an integer and $x_i = 0$ if $\sigma_i = *$.  This 
defines $(m-1)^n - 1$ eigenpairs.
However, some of these are equivalent.
Incrementing each integer in our string by $1$ modulo $m-2$ corresponds to
multiplying our eigenvector by~$\zeta$. Thus, we have defined $((m-1)^n -
1)/(m-2)$ equivalence classes of eigenvalues and eigenvectors. 
These are all equivalence classes of solutions to (\ref{eq:binomials}). 

More generally, suppose that $A$ is a diagonal tensor with $A_{ii\ldots i}$ equal
to some non-zero complex number~$a_i$. Then the eigenpairs are similarly given by $\lambda
= 1$ and $x_i = a_i^{1/{m-2}}\zeta^{\sigma_i}$ or $x_i = 0$, as above, where
$a_i^{1/{m-2}}$ is a fixed root of~$a_i$. In particular, for generic $a_i$ all
$((m-1)^n - 1)/(m-2)$ eigenpairs  will have distinct normalized eigenvalues. \qed
\end{ex}

From Theorem~\ref{thm:count}, we get the following result guaranteeing the
existence of real eigenpairs.

\begin{corollary}\label{cor:real-eig}
If $A$ has real entries and either $m$ or $n$ is odd, then $A$ has a real
eigenpair.
\end{corollary}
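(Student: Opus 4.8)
The plan is to exploit the fact, established in Theorem~\ref{thm:count}, that the equivalence classes of eigenpairs of $A$ number $N = ((m-1)^n-1)/(m-2)$ when there are finitely many, and that complex conjugation acts on this finite set when $A$ is real. If $N$ is odd and the set is finite, conjugation must fix at least one class, and a fixed class should contain a real representative; so the first step is to reduce to this parity count. A direct computation shows $N$ is odd exactly when $(m-1)^n \equiv m-1 \pmod{2(m-2)}$ in the relevant sense — more simply, when $m$ is odd we have $m-1$ even, so $(m-1)^n - 1$ is odd and $m-2$ is odd, giving $N$ odd; when $m$ is even and $n$ is odd, $m-1$ is odd so $(m-1)^n - 1 \equiv 0 \pmod{m-2}$ with $(m-1)^n$ odd minus $1$ even... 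I would instead just verify the clean statement: if $m$ is odd then $N$ is odd, and if $n$ is odd then $N \equiv n \pmod 2$ after checking the geometric-series identity $N = 1 + (m-1) + \cdots + (m-1)^{n-1}$, which has $n$ terms each congruent to $(m-1)^j$ mod $2$; when $m$ is even every term is odd so $N \equiv n$, and when $m$ is odd only the $j=0$ term is odd so $N$ is odd. Either hypothesis forces $N$ odd.

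Next I would handle the finiteness caveat: Corollary~\ref{cor:real-eig} as stated does not assume $A$ is generic, so I cannot assume finitely many eigenpairs. The standard fix is a degeneration argument. I would take a real generic tensor $A_0$ (which has exactly $N$ equivalence classes, all with multiplicity one, by Theorem~\ref{thm:count} and Example~\ref{ex:diagonal}), form the family $A_t = (1-t)A_0 + tA$ for real $t$, and track eigenpairs as $t$ ranges over $[0,1]$. Conjugation acts on the full set of eigenpairs of $A_t$ over $\CC$; for generic real $t$ the count is $N$ (odd), so there is a conjugation-fixed class, hence a real eigenpair, for a dense set of $t$. One then argues that the locus of $t$ admitting a real eigenpair is closed in $[0,1]$ — using properness of the relevant incidence variety inside $X$ times the parameter line — so $t=1$ is included. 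Alternatively, and perhaps more cleanly, I would pass to the projectivized picture: a real eigenpair corresponds to a fixed point of a continuous self-map of the real weighted projective space $X(\RR) = \PP(1,\dots,1,m-2)(\RR)$, and invoke a fixed-point / degree argument — but the toric geometry over $\RR$ is delicate, so the degeneration route is safer.

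The remaining point is that a conjugation-fixed equivalence class actually contains a genuinely real representative, not merely a class stable under conjugation. If $(\lambda, x)$ and $(\bar\lambda, \bar x)$ are equivalent, there is $t \in \CC^\ast$ with $t^{m-2}\lambda = \bar\lambda$ and $tx = \bar x$. Applying conjugation twice gives $\overline{t}\,t \cdot x = x$ and $(\overline{t} t)^{m-2}\lambda = \lambda$, so $|t|^2 = 1$ (as $x \neq 0$), i.e. $t = e^{i\theta}$. Writing $t = s^{-2}$ for a suitable unit $s = e^{-i\theta/2}$ (choosing a square root), the rescaled pair $(s^{m-2}\lambda, sx)$ is equivalent to $(\lambda,x)$ and one checks it is fixed by conjugation, hence real — here I must be slightly careful about the two choices of square root and about the case $x \cdot x$ versus the raw scaling, but it is a short unwinding. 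I expect the main obstacle to be not this algebra but making the closedness step in the degeneration fully rigorous: ensuring the set of eigenpairs stays bounded (it does, living in the compact $X$) and that a real limit of real eigenpairs is a real eigenpair with the eigenvector still nonzero. Working inside the projective space $X$ rather than $\CC^n$ is exactly what makes this compactness available, so I would phrase the whole argument there.
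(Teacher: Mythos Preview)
Your argument is correct in outline and takes a genuinely different route from the paper. The paper's proof is two lines: it notes that $N = ((m-1)^n-1)/(m-2) = \sum_{j=0}^{n-1}(m-1)^j$ is odd under the hypothesis (your geometric-series parity check makes this explicit, which the paper leaves to the reader), and then invokes \cite[Corollary~13.2]{fulton-intersection} as a black box to conclude that an odd intersection number over $\RR$ forces a real solution, with no separate treatment of the non-generic case. Your approach instead unpacks what such a citation is doing: the conjugation action on eigenpair classes, the square-root trick to extract a real representative from a fixed class (your $t = s^{-2}$ should read $t = s^{2}$, but you flagged the bookkeeping), and a degeneration-plus-compactness argument to pass from generic to arbitrary real $A$. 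That last step is cleanest if you normalize real eigenvectors to the Euclidean unit sphere $S^{n-1} \subset \RR^n$ rather than working in $X(\RR)$: then $\lambda_k = x_k \cdot A_{t_k} x_k^{m-1}$ is automatically bounded, a subsequential limit $(x^*, \lambda^*)$ exists with $x^* \neq 0$, and continuity gives $A_1 (x^*)^{m-1} = \lambda^* x^*$, so the ``eigenvector still nonzero'' worry you raised dissolves. What your route buys is a self-contained argument avoiding a heavy intersection-theory citation; what the paper's route buys is brevity and a uniform treatment that never splits into generic and non-generic cases.
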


\begin{proof}
When either $m$ or $n$ is odd, then one can check that the integer
$\,((m-1)^n - 1)/(m-2)\,$ in
Theorem~\ref{thm:count} is odd. This implies that $A$ has a real eigenpair by
\cite[Corollary 13.2]{fulton-intersection}.
\end{proof}

Corollary~\ref{cor:real-eig} is sharp, in the sense that
there exist real tensors with no real eigenpairs whenever
both $m$ and~$n$ are even.  We illustrate this in
the following example.

\begin{ex}
Let $m$ be even, $n=2$, and $A$ the $2 {\times} \cdots {\times} 2$
tensor which is zero except for the entries $a_{12\cdots2} = 1$ and
$a_{21\cdots1} = -1$. The eigenpairs of $A$ are the solutions to the
equations:
\begin{align*}
x_2^{m-1} &\,=\, \lambda x_1 \\
-x_1^{m-1} &\,=\, \lambda x_2.
\end{align*}
Eliminating $\lambda$, we obtain $x_1^m + x_2^m = 0$, which has no
non-zero real solutions for even~$m$.

For $n$ any even integer, let $B$ be the tensor whose $n/2$ diagonal $2\times
\cdots \times 2$ blocks are the tensor~$A$ above, and which is zero elsewhere. A
non-trivial eigenpair must be an eigenpair for at least one of the blocks, and
therefore cannot be real. Thus, $B$ has no real eigenpairs. \qed
\end{ex}

\section{Characteristic Polynomial and Singular Tensors}\label{sec:characteristic}

The {\em characteristic polynomial} $\phi_A(\lambda)$ 
of a generic tensor $A$ was defined as follows  in  \cite{NQWW, Qi}.
Consider the univariate polynomial in $\lambda $
that arises by eliminating the unknowns
$x_1,\ldots,x_n$ from the system of equations  $A x^{m-1} = \lambda x$ and $x \cdot x = 1$. 
If $m$ is even then this polynomial equals $\phi_A(\lambda)$.
If $m$ is odd then this polynomial has the form $\phi_A(\lambda^2)$, i.e.\ the
characteristic polynomial evaluated at $\lambda^2$.  With these definitions,
Theorem \ref{thm:count} implies the following:

\begin{corollary} \label{cor:charpol}
For a generic tensor $A$, the characteristic polynomial $\phi_A(\lambda)$ is
irreducible and has degree $((m-1)^n-1)/(m-2)$. Hence this is the number
of normalized eigenvalues.
\end{corollary}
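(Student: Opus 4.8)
The plan is to deduce Corollary~\ref{cor:charpol} from Theorem~\ref{thm:count} together with a genericity/irreducibility argument. First I would set up the incidence variety
\[
\mathcal{E} \,=\, \{ (A, x, \lambda) \,:\, Ax^{m-1} = \lambda x,\ x\cdot x = 1 \}
\]
inside $\mathbb{A}^{N} \times \mathbb{C}^n \times \mathbb{C}$, where $N = n^m$ is the number of tensor entries, and consider the two projections $\pi_1 \colon \mathcal{E} \to \mathbb{A}^N$ (to the tensor) and $\pi_2 \colon \mathcal{E} \to \mathbb{C}$ (to the eigenvalue $\lambda$). The characteristic polynomial $\phi_A$ (or $\phi_A(\lambda^2)$ when $m$ is odd) is, up to scalar, the defining polynomial of the image of the fiber $\pi_1^{-1}(A)$ under $\pi_2$, i.e.\ the univariate polynomial obtained by eliminating $x$. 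The degree claim then follows by relating the roots of this eliminant to the normalized eigenvalues: by Theorem~\ref{thm:count}, for generic $A$ there are exactly $((m-1)^n - 1)/(m-2)$ equivalence classes of eigenpairs, and each such class with $x\cdot x \neq 0$ contributes exactly the normalized representatives (two of them, related by $x \mapsto -x$, $\lambda \mapsto (-1)^m\lambda$), so for generic $A$ all classes are normalizable and the number of normalized eigenvalues is $((m-1)^n-1)/(m-2)$ when $m$ is even, and each value of $\lambda^2$ is hit once, giving the stated degree after accounting for the $\lambda \leftrightarrow \lambda^2$ substitution. I would check that for generic $A$ these $((m-1)^n-1)/(m-2)$ normalized eigenvalues are \emph{distinct} — this follows from Example~\ref{ex:diagonal}, where a diagonal tensor with generic diagonal entries is exhibited with all normalized eigenvalues distinct, hence the same holds on a dense open set of tensors.

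For irreducibility, the key is to show that the incidence variety $\mathcal{E}$ is irreducible and that $\pi_1$ is dominant with irreducible generic fiber; then the generic fiber of $\pi_2$ restricted to $\pi_1^{-1}(A)$, or rather the Galois/monodromy action on the $((m-1)^n-1)/(m-2)$ roots, is transitive, which forces $\phi_A$ to be irreducible over $\mathbb{C}(A)$, hence over $\mathbb{Q}$ (or $\mathbb{C}$) for generic $A$. I would argue irreducibility of $\mathcal{E}$ by noting that for fixed $(x,\lambda)$ with $x\cdot x = 1$ the set of tensors $A$ satisfying $Ax^{m-1} = \lambda x$ is an affine-linear subspace of constant dimension, so $\mathcal{E}$ fibers over the irreducible quadric $\{x\cdot x = 1\}\times\mathbb{C}$ with irreducible linear fibers; this makes $\mathcal{E}$ irreducible. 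The monodromy of the covering $\pi_1$ then acts transitively on the set of eigenpairs, and since equivalent eigenpairs are genuinely identified (the equivalence is the finite group action by $(m-2)$-nd roots of unity, which commutes with monodromy), the monodromy acts transitively on equivalence classes, equivalently on the roots of $\phi_A$.

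I expect the main obstacle to be the irreducibility claim — specifically, passing from irreducibility of the total space $\mathcal{E}$ to transitivity of monodromy and then to irreducibility of $\phi_A$ as a polynomial in $\lambda$. One must be careful that $\pi_2$ does not factor through some intermediate cover (which would make $\phi_A$ factor); the standard tool is that a dominant morphism from an irreducible variety has connected generic fibers after Stein factorization, and here the "Stein factorization" on the $\lambda$-side is exactly the claim that $\phi_A$ has no nontrivial factorization. I would handle the odd-$m$ case by observing that the substitution $\lambda \mapsto \lambda^2$ is the quotient by the sign involution, and that the involution acts without fixed points on the generic fiber (since generic normalized eigenvalues are nonzero), so the eliminant is genuinely a polynomial in $\lambda^2$ and its irreducibility as a polynomial in the new variable is equivalent to irreducibility of $\phi_A$. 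The remaining bookkeeping — that the degree of the eliminant matches $((m-1)^n-1)/(m-2)$ and not twice that — is routine given Theorem~\ref{thm:count} and the distinctness established via Example~\ref{ex:diagonal}.
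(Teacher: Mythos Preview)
The paper gives no proof of this corollary at all: it simply writes ``Theorem~\ref{thm:count} implies the following'' and states the result. So there is nothing to compare against for the degree claim --- your derivation from Theorem~\ref{thm:count} together with Example~\ref{ex:diagonal} (to get distinctness of the normalized eigenvalues for generic~$A$) is exactly the intended, if unwritten, argument.

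For irreducibility, you are supplying an argument the paper omits entirely, and the strategy via the incidence variety $\mathcal{E}$ is sound. Your fibration of $\mathcal{E}$ over $\{x\cdot x=1\}\times\mathbb{C}$ by affine-linear spaces of constant dimension $n^m-n$ correctly shows $\mathcal{E}$ is irreducible (for $n\ge 2$; the case $n=1$ is trivial). One wording slip: you write that $\pi_1$ has ``irreducible generic fiber,'' but the generic fiber is a finite set of more than one point, hence reducible. What you mean is that $\mathcal{E}$ is irreducible and $\pi_1$ is generically finite, whence monodromy acts transitively on the fiber. That is correct, but the monodromy and Stein-factorization detour you worry about is unnecessary: once $\mathcal{E}$ is irreducible, its image under $(A,x,\lambda)\mapsto(A,\lambda)$ has irreducible closure, and that closure is precisely the hypersurface $\{\Phi(A,\lambda)=0\}$ cut out by the generic characteristic polynomial. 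Hence $\Phi$ is irreducible in $\mathbb{C}[a_\bullet,\lambda]$, which by Gauss's lemma is the same as irreducibility in $\mathbb{C}(a_\bullet)[\lambda]$. This is the only sensible reading of the corollary's ``irreducible'': for a specific complex tensor $A$, the polynomial $\phi_A(\lambda)\in\mathbb{C}[\lambda]$ of course splits into linear factors, so the statement must refer to the generic characteristic polynomial with indeterminate tensor entries. Your handling of the odd-$m$ case via the $\lambda\mapsto\lambda^2$ quotient is fine.
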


For any particular  tensor $A$, the {\em characteristic polynomial} $\phi_A(\lambda)$ is
obtained by specializing the entries in the coefficients of the generic characteristic polynomial.
Ni {\it et al.} \cite{NQWW} expressed $\phi_A(\lambda)$
as a Macaulay resultant, which implies a formula as a ratio
of determinants. For the present work, we used Gr\"obner-based software
to compute the characteristic polynomials of various tensors.
It is tempting to surmise that all zeros of the characteristic polynomial 
$\phi_A(\lambda)$ are normalized eigenvalues of the tensor $A$.
This statement is almost true, but not quite. There is some subtle fine print, to be
 illustrated by Example \ref{ex:fineprint} below.

Qi \cite[Question 1]{Qi} asked whether the set of normalized eigenvalues of a tensor is either finite
or all of~$\CC$. We answer this question by showing a tensor where neither of
these alternatives holds:

\begin{ex} \label{ex:fineprint}
Consider the complex $2 \times 2 \times 2$ tensor $A$ whose nonzero entries are
\begin{equation*}
a_{111} = a_{221} \, =\, 1 \quad \hbox{and} \quad a_{112} = a_{222} \,=\, i  = \sqrt{-1} .
\end{equation*}
We claim that any complex number other than $0$ is a normalized eigenvalue of
$A$, but $0$ is not a normalized eigenvalue.
The equations for an eigenvalue and eigenvector of~$A$ are
$$ x_1^2 + ix_1 x_2  \,= \, \lambda x_1 \quad \hbox{and} \quad
x_1 x_2 + ix_2^2  \,= \, \lambda x_2. $$
For any $\lambda \neq 0$ we obtain a matching eigenvector
that also satisfies $\,x \cdot x = 1\,$ by taking
\begin{equation*}
x = \left( \frac{\lambda^2 + 1}{2\lambda}, \,\frac{\lambda^2 - 1}{2 i \lambda}
\right).
\end{equation*}
Hence  $\lambda$ is a normalized eigenvalue. However, if $\lambda
= 0$, then an eigenvector must satisfy
$$
x_1^2 + i x_1 x_2 \, =\, 0 \quad \hbox{and} \quad
x_1 x_2 + i x_2^2 \, = \, 0.
$$
These  imply that $\,x \cdot x  = x_1^2 + x_2^2 \,$ is zero, so
$\lambda = 0$ cannot be a normalized eigenvalue. \qed
\end{ex}

However, we have the following weaker statement:

\begin{proposition}\label{prop:normalized-eigen}
The set of normalized eigenvalues of a tensor is either finite or it
consists of all complex numbers in the complement of a finite set.
\end{proposition}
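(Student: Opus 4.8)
The plan is to realize the set of normalized eigenvalues as the image of an affine algebraic variety under a linear projection, and then to invoke Chevalley's theorem on images of morphisms together with the elementary fact that a constructible subset of the affine line is either finite or the complement of a finite set.

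Concretely, I would introduce the incidence variety
\[
 \Gamma \,\,=\,\, \bigl\{\, (x,\lambda) \in \CC^n \times \CC \,:\, (Ax^{m-1})_j = \lambda x_j \ \text{ for } 1 \le j \le n, \ \text{ and } \ x \cdot x = 1 \,\bigr\},
\]
which is the common zero locus in $\CC^{n+1}$ of the $n+1$ explicit polynomials $(Ax^{m-1})_j - \lambda x_j$ and $x\cdot x - 1$, hence a (possibly empty, possibly reducible) affine variety over $\CC$. By Definition~\ref{def:eig} and the definition of a normalized eigenvalue, a complex number $\lambda_0$ is a normalized eigenvalue of $A$ precisely when there exists $x$ with $(x,\lambda_0) \in \Gamma$. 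In other words, the set $N$ of normalized eigenvalues of $A$ is exactly the image $\pi(\Gamma)$, where $\pi \colon \CC^{n+1} \to \CC$, $(x,\lambda) \mapsto \lambda$, is the coordinate projection, which is a morphism of varieties. One checks the two inclusions $N \subseteq \pi(\Gamma)$ and $\pi(\Gamma) \subseteq N$, but both are immediate from the definitions.

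It then remains to apply Chevalley's theorem: the image $\pi(\Gamma) \subseteq \AAA^1_{\CC}$ is constructible. Since the Zariski-closed subsets of $\AAA^1_{\CC}$ are exactly the finite subsets and $\AAA^1_{\CC}$ itself, the Boolean algebra they generate, namely the collection of constructible subsets of $\AAA^1_{\CC}$, consists precisely of the finite subsets and their complements. Applying this to $N = \pi(\Gamma)$ gives the proposition.

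I do not expect a genuine obstacle here; the work is entirely in choosing the right formulation. It is tempting to instead projectivize and argue inside the weighted projective space $X = \PP(1,\ldots,1,m-2)$ of Section~\ref{sec:count}, but there the eigenvalue is only well defined modulo the rescaling $\lambda \mapsto t^{m-2}\lambda$, so one would be forced to track the invariant quantity $\lambda^2/(x\cdot x)^{m-2}$ and then take a square root, reintroducing a sign ambiguity that must be controlled; the affine incidence variety $\Gamma$ sidesteps this. Finally, I would remark that Chevalley yields only the dichotomy ``finite or cofinite'' and says nothing about the size of the finite set in either case; sharper control would require the intersection-theoretic input of Theorem~\ref{thm:count} (or the degree bound in Corollary~\ref{cor:charpol}), and Example~\ref{ex:fineprint} shows that the cofinite alternative genuinely occurs.
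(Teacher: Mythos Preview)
Your proposal is correct and follows essentially the same argument as the paper: form the affine incidence variety in $\CC^{n+1}$, project to the $\lambda$-coordinate, apply Chevalley's theorem, and use that constructible subsets of $\AAA^1_{\CC}$ are finite or cofinite. Your write-up is a bit more explicit about the structure of constructible sets in $\AAA^1$ and adds helpful side remarks, but the core idea is identical.
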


\begin{proof}
The set $\mathcal{E}(A)$ of normalized eigenvalues $\lambda$ of the tensor $A$ is defined by the condition
\begin{equation*} \exists \,x \in \CC^n  \,\,: \,\,A x^{m-1} = \lambda x \,\,\,
\hbox{and} \,\,\, x \cdot x = 1. \end{equation*}
Hence $\mathcal{E}(A)$ is the image
of an algebraic variety in $\CC^{n+1}$ under the projection $(x,\lambda) \mapsto \lambda$.
Chevalley's Theorem states that the image of an algebraic variety under a polynomial map
is constructible, that is, defined by a Boolean combination of polynomial equations
and inequations. We conclude that the set $\mathcal{E}(A)$
of normalized eigenvalues is a constructible subset of $\CC$. This
means that $\mathcal{E}(A)$ is either a finite set or the complement of a finite set.
\end{proof}

The relationship between the normalized eigenvalues  and the characteristic
polynomial is summarized in the following proposition.

\begin{proposition}
\label{prop:sing}
For a tensor~$A$, each of the following conditions implies the next:
\begin{enumerate}
\item The set $\,\mathcal E(A)$ of all normalized eigenvalues consists of all
complex numbers
\item The set $\,\mathcal{E}(A)$ is infinite.
\item The characteristic polynomial $\phi_A(\lambda)$ vanishes identically.
\end{enumerate}
\end{proposition}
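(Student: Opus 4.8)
\smallskip
\noindent\textit{Proof plan.}\ \ The implication $(1)\Rightarrow(2)$ is immediate, since $\CC$ is infinite. For $(2)\Rightarrow(3)$ the plan is to prove the contrapositive: if $\phi_A$ is not identically zero, then $\mathcal E(A)$ is finite. Write $p_B(\lambda)$ for the univariate polynomial obtained from a tensor $B$ by eliminating $x_1,\dots,x_n$ from $Bx^{m-1}=\lambda x$ and $x\cdot x=1$; thus $p_B=\phi_B$ when $m$ is even, $p_B(\lambda)=\phi_B(\lambda^2)$ when $m$ is odd, and $p_B\equiv 0$ if and only if $\phi_B\equiv 0$. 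One may assume $n\ge 2$, since for $n=1$ the set $\mathcal E(A)$ has at most two elements and $(2)$ fails. Everything then reduces to the following claim, which is the real content: \emph{every normalized eigenvalue $\lambda_0$ of $A$ is a root of $p_A$}. Indeed, granting the claim, $\phi_A\not\equiv 0$ forces $p_A\not\equiv 0$, so $p_A$ has only finitely many roots, whence $\mathcal E(A)$ is finite.

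To prove the claim I would argue by specialization on an incidence variety. The coefficients of the generic characteristic polynomial are fixed polynomials in the $n^m$ tensor entries, so $p_A$ is the specialization at $A$ of a family $p_B(\lambda)$ whose coefficients are regular functions of $B\in\CC^{n^m}$. Set
\[
Z \;=\; \bigl\{(B,x,\lambda)\in\CC^{n^m}\times\CC^n\times\CC \;:\; Bx^{m-1}=\lambda x,\ x\cdot x=1\bigr\}.
\]
Over the base $\{(x,\lambda):x\cdot x=1\}$, which is irreducible for $n\ge 2$, the equations $Bx^{m-1}=\lambda x$ involve disjoint blocks of the entries of $B$ and are linearly independent whenever $x\neq 0$, so every fiber is an affine subspace of $\CC^{n^m}$ of the constant dimension $n^m-n$; hence $Z$ is a torsor under a vector bundle over an irreducible base, and so $Z$ is irreducible of dimension $n^m$. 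For a generic tensor $B$ the elimination ideal of $Bx^{m-1}=\lambda x$, $x\cdot x=1$ in $\CC[\lambda]$ is principal, generated by $p_B$; since every element of that ideal manifestly vanishes at every normalized eigenvalue of $B$, each normalized eigenvalue of $B$ is a root of $p_B$. Therefore the regular function $(B,x,\lambda)\mapsto p_B(\lambda)$ vanishes on the locus $Z^{\circ}\subseteq Z$ of points whose first coordinate is a tensor generic in this sense. If $Z^{\circ}$ is nonempty, it is a nonempty open subset of the irreducible variety $Z$, hence dense, so $p_B(\lambda)$ vanishes identically on $Z$; specializing $B\mapsto A$ and letting $(x,\lambda)$ run over the normalized eigenpairs of $A$ gives $p_A(\lambda_0)=0$ for every normalized eigenvalue $\lambda_0$, which is the claim.

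The only non-formal step, and thus the main obstacle, is to show $Z^{\circ}\neq\emptyset$: there must exist a tensor that is generic in the above sense and also has a normalized eigenpair. Since $Z^{\circ}$ is the intersection of $Z$ with a nonempty open cylinder over $\CC^{n^m}$, it suffices to know that the projection $\pi\colon Z\to\CC^{n^m}$ is dominant, that is, that a generic tensor has a normalized eigenpair. I would obtain this from the theorem on the dimension of fibers: by Example~\ref{ex:diagonal} the fiber of $\pi$ over a generic diagonal tensor is finite and nonempty, so $\pi$ has a zero-dimensional fiber, and a morphism from an irreducible variety of dimension $n^m$ to $\CC^{n^m}$ that has a zero-dimensional fiber is necessarily dominant; hence $\pi(Z)$ is dense in $\CC^{n^m}$ and therefore meets the open genericity locus, giving $Z^{\circ}\neq\emptyset$. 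Once $Z$ is known to be irreducible and to dominate the space of tensors, the claim---and with it Proposition~\ref{prop:sing}---follows.
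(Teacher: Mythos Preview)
Your proof is correct and follows the same underlying strategy as the paper: both arguments reduce $(2)\Rightarrow(3)$ to the claim that every normalized eigenvalue of $A$ lies in the zero set of $\phi_A$. The paper dispatches this in one line, writing simply that ``by the projection argument \ldots\ the zero set in $\CC$ of the characteristic polynomial $\phi_A(\lambda)$ contains the set $\mathcal E(A)$,'' treating it as an immediate consequence of elimination theory.

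Your incidence-variety argument is a genuinely more careful justification of that same claim. Because $\phi_A$ is defined by elimination over the \emph{generic} tensor and then specialized, one must check that the resulting polynomial still vanishes at all normalized eigenvalues of a \emph{particular} tensor; naively clearing denominators in the membership $\Phi\in I\otimes\CC(A)$ only gives $d(A)\cdot\phi_A(\lambda_0)=0$ for some nonzero $d\in\CC[A]$, which says nothing when $d(A)=0$. Your irreducibility-plus-density argument on $Z$ is precisely what closes this gap. The fiber-dimension step (a zero-dimensional nonempty fiber over a diagonal tensor forces $\pi$ to be dominant, since $\dim Z=n^m$) is sound, and the irreducibility of $\{x\cdot x=1\}\times\CC$ for $n\ge 2$ is standard. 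So your version is longer but more self-contained; the paper's version is terse and relies on the reader accepting the specialization step without comment.
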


\begin{proof}
Clearly, (1) implies (2).
By the projection argument in the proof above,
the zero set in $\CC$ of the characteristic polynomial $\phi_A(\lambda)$
contains the set $\mathcal{E}(A)$. Hence (2) implies (3).
\end{proof}

From Example~\ref{ex:fineprint}, we see that
(2) does not necessarily imply (1), and in Example~\ref{ex:dreizwei}, we shall see
that (3) does not imply (2).
Qi defines a singular tensor to be one for which the first statement of
Proposition~\ref{prop:sing} holds. However, we suggest that the last condition
is a better definition: a 
\emph{singular tensor} is a tensor such that $\phi_A(\lambda)$ vanishes
identically. This definition has the advantage that the limit of singular
tensors is again singular. In particular, 
the set of all singular tensors is
a closed subvariety in the $n^m$-dimensional tensor space
$\CC^{n  \times \cdots \,\times n}$. Its defining polynomial equations
are the coefficients of the characteristic polynomial $\phi_A(\lambda)$
where $A$ is the tensor whose entries
$a_{i_1 \cdots i_n}$ are indeterminates.

\begin{example}
\label{ex:dreizwei}
Let $m = 3$ and $n=2$. Here $A = (a_{ijk})$ is a general tensor of format $2 \times 2 \times 2$.
The characteristic polynomial $\phi_A$ is obtained by eliminating $x_1$ and $x_2$ from
the ideal
$$  \langle \,
 a_{111}  x_1^2 + (a_{112} + a_{121}) x_1 x_2 +  a_{122}  x_2^2 - \lambda  x_1\, , \,
  a_{211}  x_1^2 + (a_{212} + a_{121}) x_1 x_2 +  a_{222}  x_2^2 - \lambda  x_2 \,, \,
  x_1^2 + x_2^2 - 1 \, \rangle .   $$
We find that $\phi_A$ has degree $3$, as predicted by Theorem \ref{thm:count}. 
Namely, the elimination yields
 $$ \phi_A(\lambda^2) \quad = \quad
 C_2 \lambda^6 \, + \,C_4 \lambda^4 \,+\, C_6 \lambda^2 + C_8, $$
 where $C_i$ is a certain homogeneous polynomial of degree $i$ in the
 eight unknowns $a_{ijk}$.
The set of singular $2 {\times} 2 {\times} 2$-tensors 
is the variety  in the projective space $\PP^7 = \PP(\CC^{2 \times 2 \times 2})$
given by 
\begin{equation}
\label{eq:ideal}
 \langle  C_2, C_4, C_6, C_8 \rangle \,\,\subset \,\, \CC[a_{111},a_{112}, \ldots,a_{222}] 
 \end{equation}
This is an irreducible variety of
 codimension~$2$ and degree~$4$, but the 
 ideal (\ref{eq:ideal}) is not prime.
 The constant coefficient $C_8$ is the square of a quartic. That quartic is 
   the {\em Sylvester resultant}
 $$ {\rm Res}_x (A x^2) \quad = \quad  {\rm det}
 \begin{pmatrix}
\, a_{111} & a_{112} + a_{121} & a_{122} & 0 \, \\
 \, 0 &  a_{111} & a_{112} + a_{121} & a_{122} \, \\
 \,  a_{211} & a_{212} + a_{221} & a_{222} & 0 \, \\
 \, 0 &  a_{211} & a_{212} + a_{221} & a_{222} \,
 \end{pmatrix}. \qquad
 $$
 The leading coefficient of the characteristic polynomial is a sum of squares
 $$ C_2  \, \,\,= 
\, \,\, (-a_{111} + a_{122} + a_{212} + a_{221} )^2 \,+\, ( a_{112} + a_{121} + a_{211} - a_{222})^2 $$
This indicates that among singular
$2 {\times} 2 {\times} 2$-tensors
those with real entries are scarce. Indeed,
the real variety of (\ref{eq:ideal}) is the union of two linear
spaces of codimension~$4$, with defining ideal
\begin{equation*}
\langle a_{122}, a_{211},
a_{112}+a_{121}-a_{222},
a_{212}+a_{221}-a_{111} \rangle \,\cap \,
\langle
a_{111} - a_{122},
a_{211} - a_{222},
a_{112} + a_{121},
a_{212} + a_{221} \rangle.
\end{equation*}
This explains why the singular tensor in Example \ref{ex:fineprint}
had to have a non-real coordinate.

We now look more closely at the real singular tensors defined by the second
ideal in this intersection. These are
tensors $A$ for which $\,Ax^2 = \bigl( a_{111}(x^2 + y^2) \,,\,
a_{211}(x^2 +y^2) \bigr)$. It is easy to see that, so long as $a_{111}$
and~$a_{211}$ are not both zero, the only normalized eigenvector is
\begin{equation*}
\left( \frac{a_{111}}{\sqrt{a_{111}^2 + a_{211}^2}},
\frac{a_{211}}{\sqrt{a_{111}^2 + a_{211}^2}} \right),
\mbox{ which has eigenvalue } \lambda = \sqrt{a_{111}^2 + a_{211}^2}.
\end{equation*}
In particular, the number of eigenvalues of such a tensor must be finite. 
This example shows that (3) does
not imply (2) in Proposition~\ref{prop:sing}.  \qed
\end{example}

The reader will not have failed to notice that  the notion of 
``singular'' used here (and in \cite{Qi}) is more restrictive
than the one familiar from the classical case $m=2$.
Indeed, a matrix is singular if it has $\lambda = 0$ as an eigenvalue,
or, equivalently, if the constant term of the characteristic polynomial
vanishes. That constant term is a power of the resultant
$\,{\rm Res}_x(A x^{m-1})$, and its vanishing means that
the homogeneous equations $A x^{m-1} = 0$ have a non-trivial
solution $x \in \PP^{n-1}$. This holds when the tensor $A$ is singular but not conversely.

Yet another possible notion of singularity for a tensor $A$ arises from its 
{\em hyperdeterminant} ${\rm Det}(A)$, as defined in \cite{gkz}.
For example, the hyperdeterminant of a $2 {\times} 2 {\times} 2 $-tensor equals
$$ \begin{matrix}
{\rm Det}(A) &=&
a_{122}^2 a_{211}^2 
+a_{121}^2 a_{212}^2
+a_{112}^2 a_{221}^2
+a_{111}^2 a_{222}^2 \qquad \qquad \qquad \qquad \qquad  \qquad
\qquad  \\ &  &
- 2 a_{121} a_{122} a_{211} a_{212}
 -2 a_{112} a_{122} a_{211} a_{221} -2 a_{112} a_{121} a_{212} a_{221}
 -2 a_{111} a_{122} a_{211} a_{222} \\ &  &
 - 2 a_{111} a_{121} a_{212} a_{222} - 2 a_{111} a_{112} a_{221} a_{222}
+4 a_{111} a_{122} a_{212} a_{221}  + 4 a_{112} a_{121} a_{211}
a_{222} .
\end{matrix}
$$
The hyperdeterminant vanishes if the hypersurface
defined by the multilinear form associated with $A$ has a singular point 
in $(\PP^{n-1})^m$. This property is unrelated to the
coefficients of the characteristic polynomial $\phi_A(\lambda)$.
In particular,  $\,{\rm Det}(A) \not= {\rm Res}_x(A x^{m-1}) $.

For an example take
the $2 {\times} 2 {\times} 2$-tensor $A$ all of whose entries are $a_{ijk} = 1$ is not singular
but ${\rm Det}(A) = 0$. On the other hand, the following tensor $A$ is
singular but has ${\rm Det}(A) =  -1$:
\begin{equation*} a_{111} = -1, \,
 a_{112} = 0, \,
  a_{121} = 0, \,
   a_{122} = -1, \,\,
a_{211} = 1,\,
 a_{212} = -1, \,
  a_{221} = 0, \,
   a_{222} = 1-i. \end{equation*}
This highlights the distinction between our setting here and that in
\cite[Proposition 2]{Lim}. 

\section{Dynamics on Projective Space}\label{sec:dynamics}

The purpose of this short section is to point out a connection to
dynamical systems. Dynamics on projective space is a well-established
field of mathematics \cite{BJ, ivashkovich}. 
We believe that the interpretation  of eigenpairs of tensors 
in terms of fixed points on $\PP^{n-1}$
could be of interest to applied mathematicians
as a new tool for modeling and numerical computations.

We consider the map $\psi_A$ defined by the formula $\psi_A(x) = Ax^{m-1}$. This is a
rational map from complex projective space $\PP^{n-1}$ to itself. The fixed
points of the map $\psi_A \colon \PP^{n-1} \dashrightarrow \PP^{n-1}$ are
exactly the eigenvectors of the tensor $A$ with non-zero eigenvalue, and the
base locus of $\psi_A$ is the set of eigenvectors with eigenvalue zero. In particular,
the map $\psi_A$ is defined everywhere if and only if $0$ is not an eigenvalue of~$A$.

Note that every such rational map arises from some tensor~$A$, but the tensor is
not unique.  Indeed, $A$ has $n^m$ entries while the map is
determined by $n$ polynomials, which have only $n \binom{n +m -2}{  m-1}$
distinct coefficients.  For instance, in Example~\ref{ex:dreizwei}, with
$m=3,n=2$, the eight entries of the tensor translate into six distinct
coefficients of the two binary quadrics that specify the self-map
of the projective line $\,\psi_A\colon \PP^1
\dashrightarrow \PP^1$. 

It is instructive to revisit the classical case $m=2$, where
$\psi_A\colon \PP^{n-1} \dashrightarrow \PP^{n-1}$ is a linear map.
The condition that every eigenvalue of the matrix~$A$ is zero
is equivalent to saying that
$A$ is {\em nilpotent}, that is, some matrix power of~$A$ is zero.
Geometrically, this means that some iterate of the rational map $\psi_A$ is
defined nowhere in projective space $\PP^{n-1}$.
We use the same definition for tensors: A is {\em nilpotent} if
some iterate of $\psi_A$ is nowhere defined.

\begin{proposition}
If the tensor $A$ is nilpotent then $0$ is the only eigenvalue of $A$.
The converse is not true:
there exist tensors with only eigenvalue $0$ that are not nilpotent.
\end{proposition}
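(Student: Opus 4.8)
The plan is to prove the two assertions separately. For the first, \emph{nilpotent implies $0$ is the only eigenvalue}, I would argue by the fixed-point interpretation from Section~\ref{sec:dynamics}: if $\lambda \neq 0$ is an eigenvalue with eigenvector $x$, then $[x] \in \PP^{n-1}$ is a genuine fixed point of $\psi_A$, in particular $[x]$ lies outside the base locus of $\psi_A$ and $\psi_A([x]) = [x]$. By induction, $[x]$ then lies outside the base locus of every iterate $\psi_A^{k}$ and is fixed by each of them, so no iterate can be nowhere defined. Hence a nilpotent tensor has no nonzero eigenvalue, and since $Ax^{m-1} = \lambda x$ with $x \neq 0$ always has \emph{some} solution with $\lambda$ possibly zero — or more simply, since the only remaining possibility for an eigenvalue is $0$ — the first claim follows. (If one wants to be careful: the statement ``$0$ is the only eigenvalue'' is vacuously compatible with $A$ having no eigenvalues at all, but in fact the base locus of a nowhere-defined iterate is nonempty, which forces $0$ to actually occur; I would spell this out only if needed.)

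For the converse, I would exhibit an explicit tensor. The natural candidate, mirroring the $m=2$ situation, is something ``upper triangular'' that is \emph{not} nilpotent because an iterate of $\psi_A$ stabilizes to a constant map rather than collapsing. Concretely, take $n = 2$ and let $A$ be the order-$m$ tensor with $a_{1 1 \cdots 1} = 1$ and $a_{2 \, i_2 \cdots i_m} = 0$ for all $i_2,\dots,i_m$ except $a_{2\,1\,1\cdots1}=1$, with everything else zero; then $\psi_A([x_1:x_2]) = [x_1^{m-1} : x_1^{m-1}] = [1:1]$ whenever $x_1 \neq 0$, so $\psi_A$ is a constant map on its domain, its base locus is $\{[0:1]\}$, and every iterate $\psi_A^k$ has the same base locus $\{[0:1]\}$ — in particular no iterate is nowhere defined, so $A$ is not nilpotent. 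On the other hand, solving $Ax^{m-1} = \lambda x$ gives $x_1^{m-1} = \lambda x_1$ and $x_1^{m-1} = \lambda x_2$; if $x_1 \neq 0$ these force $x_1 = x_2$ and $x_1^{m-2} = \lambda$, giving a nonzero eigenvalue, which I do \emph{not} want — so I must instead arrange the unique fixed point of the stabilized map to be a base point. I would therefore adjust the construction: use a tensor whose associated self-map has image a single point that also lies in the base locus, e.g. $\psi_A([x_1:x_2]) = [x_2^{m-1} : 0]$, coming from $a_{1\,2\,2\cdots2}=1$ and all else zero. Then the equations are $x_2^{m-1} = \lambda x_1$, $0 = \lambda x_2$; a nonzero eigenvalue forces $x_2 = 0$, hence $x_2^{m-1}=0=\lambda x_1$, hence $x_1 = 0$, a contradiction — so $0$ is the only eigenvalue. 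Yet $\psi_A$ has base locus $\{[1:0]\}$ and image $\{[1:0]\}$, so $\psi_A^2$ has base locus $\{[1:0]\}$ (not all of $\PP^1$), and by induction $\psi_A^k$ is always defined somewhere; thus $A$ is not nilpotent.

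The main step requiring care is verifying the base-locus behavior of the iterates in the converse example: one must check that composing $\psi_A$ with itself does not, through cancellation of common factors, enlarge the base locus to all of $\PP^{n-1}$. With the map $\psi_A : [x_1:x_2] \mapsto [x_2^{m-1}:0]$ this is transparent — its image is the point $[1:0]$, which is its unique base point, so the natural lift of $\psi_A^2$ is $[x_1:x_2] \mapsto [\,(0)^{m-1} : 0\,]$ along the image... here I need to be slightly more careful and instead track the composition directly on coordinates: $\psi_A^2[x_1:x_2]$ is obtained by plugging $(x_2^{m-1},0)$ into $\psi_A$, yielding $(0^{m-1},0) = (0,0)$, which is indeterminate \emph{everywhere}, so this particular choice is in fact nilpotent and does \emph{not} work. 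The genuinely working construction is the first one I described (the constant map to $[1:1]$), but with the eigenvalue issue resolved by a dimension trick: take $n=3$ and $\psi_A[x_1:x_2:x_3] = [x_2^{m-1} : x_3^{m-1} : 0]$. Then the eigen-equations $x_2^{m-1}=\lambda x_1$, $x_3^{m-1}=\lambda x_2$, $0 = \lambda x_3$ force (for $\lambda\neq0$) $x_3=0\Rightarrow x_2 = 0 \Rightarrow x_1 = 0$, so $0$ is the only eigenvalue; while $\psi_A$ has base locus $\{[1:0:0]\}$, $\psi_A^2[x_1:x_2:x_3] = [x_3^{(m-1)^2} : 0 : 0]$ has base locus the line $\{x_3 = 0\}$, and $\psi_A^3 = [0:0:0]$ is nowhere defined — so this $A$ \emph{is} nilpotent too. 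The correct fix, which I expect to be the real content of the argument, is to break the ``chain'' so iteration reaches a fixed nonempty base locus: take $n=2$, $\psi_A[x_1:x_2] = [x_1^{m-1}+x_2^{m-1} : x_2^{m-1}]$ adjusted so that the image avoids covering $\PP^1$; I would search small cases computationally (as the authors did elsewhere) to pin down a clean witness, then verify by direct computation that (i) every eigenvalue is $0$ and (ii) some fixed finite base locus persists under all iterates. That verification — exhibiting one explicit $2\times2\times\cdots$ tensor and checking both properties by hand — is the concrete obstacle, but it is a finite check rather than a conceptual difficulty.
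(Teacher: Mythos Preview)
Your argument for the first implication is correct and is exactly the paper's proof: a nonzero eigenvalue yields a fixed point of $\psi_A$ in $\PP^{n-1}$, which then survives as a point of definition for every iterate, contradicting nilpotence.

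For the converse, however, you never actually produce a working example. Each of your candidates either has a nonzero eigenvalue (the constant map to $[1{:}1]$) or turns out to be nilpotent (the maps $[x_2^{m-1}{:}0]$ and $[x_2^{m-1}{:}x_3^{m-1}{:}0]$), and you end by proposing a computer search. This is a genuine gap, and it is not merely a ``finite check'': the structural idea you are missing is to use a \emph{translation}. The paper takes the $2\times2\times2$ tensor with $a_{111}=a_{211}=a_{212}=1$ and all other entries zero, so that
\[
\psi_A\,[x_1:x_2] \;=\; [\,x_1^2 \,:\, x_1^2+x_1x_2\,] \;=\; [\,x_1 \,:\, x_1+x_2\,] \quad\text{on }\{x_1\neq0\}.
\]
In the affine coordinate $t=x_2/x_1$ this is $t\mapsto t+1$, which has no fixed point yet is defined on all of $\AAA^1$; hence every iterate $\psi_A^{(r)}$ is still defined on $\{x_1\neq0\}$, and $A$ is not nilpotent. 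A direct check of $x_1^2=\lambda x_1$, $x_1^2+x_1x_2=\lambda x_2$ shows the only eigenpair is $x=(0,1)$, $\lambda=0$. Your attempts oscillated between constant maps (which necessarily have a fixed point, hence a nonzero eigenvalue) and ``shift'' maps whose iterates collapse to the zero map; a fixed-point-free automorphism of the affine line is precisely what avoids both failure modes.
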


\begin{proof}
Suppose $\lambda \not= 0 $ is an eigenvalue and $x \in \CC^n\backslash \{0\}$
a corresponding eigenvector. Then $x$ represents a point in $\PP^{n-1}$
that is fixed by $\psi_A$. Hence it is fixed by every iterate $\psi_A^{(r)}$ of $\psi_A$.
In particular, $\psi_A^{(r)}$ is defined at (an open neighborhood) of 
$x \in\PP^{n-1}$, and $A$ is not nilpotent.

Let $A$ be the $2 {\times} 2 {\times} 2$-tensor with 
$a_{111} = a_{211} = a_{212} = 1$ and the other five entries zero.
The eigenpairs of $A$ are the solutions to $\,
x_1^2 \, = \, \lambda x_1 \,$ and $\,
x_1^2 + x_1x_2  \, = \, \lambda x_2 $.
Up to equivalence, the only eigenpair is $x = (0, 1)$ and $\lambda = 0$.
However, the self-map $\psi_A$ on $\PP^1$ is dominant.
To see this, note that $\psi_A$ acts by translation on the affine line $\,\AAA^1 = \{
x_1 \neq 0\}$ because $(x_1^2: x_1^2 + x_1x_2) = (x_1:
x_1+x_2)$.  All iterates of $\psi_A$ are defined  on 
$\AAA^1$, i.e.~there are no base points with $x_1 \not= 0$, and hence $A$ is
not nilpotent.
\end{proof}

The example in the previous proof works because the two binary quadrics in $Ax^{m-1}$
have $x_1$ as a common factor. Indeed, whenever $n=2$, an eigenvector~$x$ has eigenvalue
zero if and only if $x$ is a solution to a common linear factor of the two binary forms
of $Ax^{m-1}$.

However, for $n \geq 3$, this is no longer true. 
Work in dynamics by Ivashkovic \cite[Theorem~1]{ivashkovich}
 implies that, for $n=3$, one can construct tensors~$A$ such that zero
is the only eigenvalue, the polynomials in $Ax^{m-1}$ have no common factors,
but $A$ is not nilpotent.

\begin{example}
This example is taken from \cite[Example~4.1]{ivashkovich}.
Let $m=n=3$ and take $A$ to be any tensor whose corresponding map
is the Cremona transformation
$$ \psi_A \colon \PP^2 \dashrightarrow \PP^2 \,:\,
 (x_1,x_2,x_3) \mapsto (x_1 x_2, x_1 x_3, 2 x_2 x_3).$$
This map has no fixed points, but it is not
nilpotent. The base locus of $\psi_A$ consists of the three
points  $(1,0,0)$, $(0,1,0)$, and~$(0,0,1)$, and, up to scaling, these are the
only eigenvectors of~$A$, all with eigenvalue~$0$.
\qed
\end{example}

\section{Symmetric Tensors}\label{sec:symmetric}

Of particular interest in numerical multilinear algebra is the situation 
when the tensor $A$ is symmetric and has real entries.
Here $A$ being {\em symmetric} means that 
the entries $a_{i_1 i_2 \cdots i_n}$ are invariant under
permuting the $n$ indices $i_1, i_2,\ldots,i_n$.
Each symmetric $n {\times} \cdots {\times} n$ tensor $A$ 
of order~$m$ corresponds to a unique homogeneous
polynomial~$f(x)$ of degree~$m$ in~$n$ unknowns.

The symmetric case is of interest because a real polynomial~$f(x)$ of even
degree~$m$ is positive semidefinite if and only if every real eigenpair of the
corresponding symmetric tensor~$A$ has non-negative
eigenvalue~\cite[Theorem~5(a)]{Qi-sym}.  This is illustrated in
Example~\ref{ex:motzkin}.

In the notation of \cite{Qi-sym}, the relation between the tensor and the
polynomial is written as
\begin{equation}
\label{eqn:iswrittenas} Ax^m \,\,=\,\, mf(x) \quad \hbox{and} \quad
A x^{m-1} \,\, =\,\, \nabla f(x) ,
\end{equation}
where $Ax^m$ is defined to be
\begin{equation*}
\sum_{i_1 = 1}^n \sum_{i_2 = 1}^n \cdots \sum_{i_m = 1}^n
a_{i_1 \ldots i_m} x_{i_1}
x_{i_2} \cdots x_{i_m} = x \cdot Ax^{m-1}.
\end{equation*}
The first equation in~(\ref{eqn:iswrittenas}) follows from the second because $x
\cdot \nabla f(x) = mf(x)$.
Note that the second equation in~(\ref{eqn:iswrittenas}) says that the
coordinates of the gradient of $f(x)$ are precisely the entries of
$A x^{m-1}$. 
The gradient $\nabla f(x)$ vanishes at a point $x$ in
$\PP^{n-1}$ if and only if $x$ is a singular point
of the hypersurface in $\PP^{n-1}$ defined by
the polynomial $f(x)$. This implies:

\begin{corollary} \label{cor:sing2}
The singular points of the projective hypersurface $\{x \in  \PP^{n-1}:f(x) = 0\}$ 
are precisely the eigenvectors of the corresponding
symmetric tensor $A$ which have eigenvalue~$0$.
\end{corollary}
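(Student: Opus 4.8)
The plan is to unwind the definitions and reduce everything to the two identities in~(\ref{eqn:iswrittenas}). Recall that a point of $\PP^{n-1}$ represented by a nonzero vector $x \in \CC^n$ is a \emph{singular point} of the hypersurface $\{f = 0\}$ precisely when $f(x) = 0$ and all partial derivatives $\partial f/\partial x_j$ vanish at $x$; since $f$ has degree $m$, each $\partial f/\partial x_j$ is homogeneous of degree $m-1$, so this condition is independent of the scaling of $x$ and is well-defined on $\PP^{n-1}$. The second equation in~(\ref{eqn:iswrittenas}) identifies the coordinate vector $\nabla f(x)$ with $A x^{m-1}$, so the vanishing of all the partials of $f$ at $x$ is literally the equation $A x^{m-1} = 0$, i.e.\ the statement that $x$ is an eigenvector of $A$ with eigenvalue $0$.

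First I would dispose of the apparent extra hypothesis $f(x) = 0$ in the definition of a singular point: by Euler's identity $x \cdot \nabla f(x) = m f(x)$, and since $m \geq 1$, the vanishing of $\nabla f(x)$ already forces $f(x) = 0$. Hence ``$x$ is a singular point of $\{f=0\}$'' is equivalent to the single condition $\nabla f(x) = 0$ on a nonzero vector $x$. Then I would simply chain the equivalences: $x$ singular on $\{f=0\}$ $\Longleftrightarrow$ $\nabla f(x) = 0$ $\Longleftrightarrow$ $A x^{m-1} = 0$ $\Longleftrightarrow$ $(0, x)$ is an eigenpair of $A$. Finally, since for $\lambda = 0$ the equivalence of Definition~\ref{def:eig} reduces to rescaling $x$, the ambiguity on the eigenvector side matches exactly the ambiguity in a point of $\PP^{n-1}$, so the asserted bijective correspondence between the two sets is well-posed.

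There is essentially no real obstacle: the argument is a direct translation of definitions combined with the classical Euler relation. The one point I would be careful to state explicitly is that the equation $f(x)=0$ cutting out the hypersurface need not be imposed separately, because it is a consequence of $\nabla f(x)=0$; this is precisely what makes ``singular points of $\{f=0\}$'' and ``eigenvectors with eigenvalue $0$'' \emph{coincide} rather than one being merely contained in the other.
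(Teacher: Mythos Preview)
Your argument is correct and is essentially the same as the paper's: the paper derives the corollary directly from the identity $A x^{m-1} = \nabla f(x)$ in~(\ref{eqn:iswrittenas}) together with the remark that $\nabla f(x) = 0$ is exactly the condition for $x$ to be a singular point of the projective hypersurface $\{f=0\}$. Your version simply makes explicit, via Euler's relation, why the extra condition $f(x)=0$ is automatic.
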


The other eigenvectors  of $A$ can also be characterized in terms of 
the polynomial $f(x)$.

\begin{proposition}
Fix a non-zero $\lambda$ and suppose $m \geq 3$. Then $x \in \CC^n$ is a
normalized
eigenvector with eigenvalue $\lambda$ if and only if
$x$ is a singular point of the affine hypersurface
defined by the  polynomial
\begin{equation}\label{eqn:eig-char}
f(x) - \frac{\lambda}{2} x \cdot x - \left(\frac{1}{m} -
\frac{1}{2}\right) \lambda.
\end{equation}
\end{proposition}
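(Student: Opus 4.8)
The plan is to use the Jacobian criterion: for the polynomial
\[
g(x) \;=\; f(x) - \frac{\lambda}{2}\,x\cdot x - \Bigl(\frac{1}{m}-\frac{1}{2}\Bigr)\lambda
\]
of (\ref{eqn:eig-char}), a point $x$ is a singular point of the affine hypersurface $\{g=0\}$ exactly when $g(x)=0$ and $\nabla g(x)=0$. First I would compute the gradient. The constant term contributes nothing, $\nabla\bigl(\tfrac{\lambda}{2}\,x\cdot x\bigr)=\lambda x$, and by the second identity in (\ref{eqn:iswrittenas}) we have $\nabla f(x)=Ax^{m-1}$; hence $\nabla g(x)=Ax^{m-1}-\lambda x$. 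So $\nabla g(x)=0$ is precisely the eigenvector equation $Ax^{m-1}=\lambda x$.

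Next I would evaluate $g$ on a solution of $Ax^{m-1}=\lambda x$. Using $Ax^m=x\cdot Ax^{m-1}$ and $Ax^m=mf(x)$ from (\ref{eqn:iswrittenas}) — equivalently Euler's identity $x\cdot\nabla f(x)=mf(x)$ — we get $mf(x)=x\cdot\lambda x=\lambda(x\cdot x)$, so $f(x)=\tfrac{\lambda}{m}(x\cdot x)$. Substituting and collecting the $x\cdot x$ terms yields
\[
g(x) \;=\; \Bigl(\frac{1}{m}-\frac{1}{2}\Bigr)\lambda\,\bigl(x\cdot x-1\bigr).
\]
Since $m\geq 3$ the scalar $\tfrac1m-\tfrac12$ is nonzero, and $\lambda\neq 0$ by hypothesis, so on the locus $\{\nabla g=0\}$ the condition $g(x)=0$ is equivalent to $x\cdot x=1$. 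Thus a point is a singular point of $\{g=0\}$ if and only if it satisfies both $Ax^{m-1}=\lambda x$ and $x\cdot x=1$, provided we also know that such a point is genuinely an eigenvector, i.e. that $x\neq 0$.

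That last point is immediate, and is the only place the normalizing constant in (\ref{eqn:eig-char}) earns its keep: $g(0)=\bigl(\tfrac12-\tfrac1m\bigr)\lambda\neq 0$, so the origin is not on the hypersurface at all, let alone a singular point of it. Consequently every singular point has $x\neq 0$, hence is a bona fide eigenvector, and the displayed formula for $g(x)$ then forces $x\cdot x=1$; conversely, a normalized eigenvector $x$ for $\lambda$ satisfies $\nabla g(x)=Ax^{m-1}-\lambda x=0$ and $g(x)=\bigl(\tfrac1m-\tfrac12\bigr)\lambda\,(x\cdot x-1)=0$, so it is a singular point. I do not anticipate any genuine obstacle: the whole content is the two identities $\nabla g(x)=Ax^{m-1}-\lambda x$ and $g(x)=\bigl(\tfrac1m-\tfrac12\bigr)\lambda\,(x\cdot x-1)$ on the eigenvector locus, the second of which relies on Euler's relation and on the precise choice of the constant term in (\ref{eqn:eig-char}) that was rigged to make $g$ vanish on the normalized eigenvectors rather than on the whole cone $Ax^{m-1}=\lambda x$.
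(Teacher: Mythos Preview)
Your argument is correct and follows essentially the same route as the paper's proof: compute $\nabla g = Ax^{m-1}-\lambda x$, then use Euler's relation to reduce $g(x)=0$ on the eigenvector locus to $x\cdot x = 1$. Your additional remark that $g(0)\neq 0$ rules out $x=0$ is a nice explicit justification of a point the paper leaves implicit.
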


\begin{proof}
The gradient of~(\ref{eqn:eig-char}) is $\nabla f - \lambda x = Ax^{m-1} -
\lambda x$, so every singular point $x$ is an eigenvector with
eigenvalue~$\lambda$. Furthermore, if we substitute $f(x) = \frac{1}{m} x \cdot \nabla f = 
\frac{\lambda}{m} x\cdot x\,$ into (\ref{eqn:eig-char}), then we obtain $x\cdot x = 1$.
This argument is reversible: if $x$ is a normalized eigenvector of $A$ then 
$x \cdot x = 1$ and $\nabla f(x)  = \lambda x$, and this implies that
(\ref{eqn:eig-char}) and its derivatives vanish.
\end{proof}

\begin{corollary} The characteristic polynomial $\phi_A(\lambda)$ is
a factor of the discriminant of (\ref{eqn:eig-char}).
\end{corollary}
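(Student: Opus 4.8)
The plan is to play off the two polynomials in $\lambda$ against each other: the characteristic polynomial $\phi_A(\lambda)$, and the polynomial $\Psi(\lambda)$ obtained by taking the discriminant of the expression in (\ref{eqn:eig-char}) with respect to the variables $x=(x_1,\dots,x_n)$, with the entries of $A$ (equivalently, the coefficients of $f$) kept generic. First I would pin down what ``discriminant'' means here: $\Psi(\lambda)$ is the discriminant of the degree-$m$ homogenization $\overline g_\lambda$ of (\ref{eqn:eig-char}), so $\Psi(\lambda)=0$ exactly when the projective hypersurface $\{\overline g_\lambda=0\}\subset\PP^n$ is singular. The key geometric input is the Proposition just proved above: for $\lambda\neq 0$, a normalized eigenvector $x$ of $A$ with eigenvalue $\lambda$ is a singular point of the \emph{affine} hypersurface $\{g_\lambda=0\}\subset\AAA^n$. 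Homogenizing and invoking Euler's identity upgrades this to a singular point $(1:x)$ of the projective closure: indeed $\overline g_\lambda(1,x)=g_\lambda(x)=0$ and $\partial_{x_i}\overline g_\lambda(1,x)=\partial_{x_i}g_\lambda(x)=0$ for $i\geq 1$, whence the Euler relation $\sum_{i\geq 0}x_i\partial_{x_i}\overline g_\lambda=m\,\overline g_\lambda$ forces $\partial_{x_0}\overline g_\lambda(1,x)=0$ as well. Therefore every nonzero normalized eigenvalue of $A$ is a root of $\Psi(\lambda)$.

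Next I would invoke Corollary~\ref{cor:charpol}: for generic $A$ the polynomial $\phi_A(\lambda)$ is irreducible over the function field $K=\CC(a_{i_1\cdots i_m})$, of degree $N=((m-1)^n-1)/(m-2)$, and its $N$ distinct roots are precisely the $N$ normalized eigenvalues. For generic $A$ none of these roots is $0$, since the constant term of $\phi_A$ is a power of the resultant $\mathrm{Res}_x(Ax^{m-1})$, which is generically nonzero; equivalently, by Example~\ref{ex:diagonal} the diagonal tensor has all its normalized eigenvalues nonzero, so this persists on a dense open set. Hence every root $\lambda_0$ of $\phi_A$ is a nonzero normalized eigenvalue, and by the first paragraph $\Psi(\lambda_0)=0$. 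Thus $\phi_A$ and $\Psi$ share a root in $\overline K$, so $\gcd_K(\phi_A,\Psi)\neq 1$; since $\phi_A$ is irreducible over $K$, this gcd equals $\phi_A$ up to a scalar, i.e.\ $\phi_A\mid\Psi$ in $K[\lambda]$. Finally I would descend to arbitrary tensors by specializing the $a_{i_1\cdots i_m}$: writing $\Psi=\phi_A\cdot r$ with $r\in K[\lambda]$, Gauss's lemma (using that $\phi_A$ is primitive as a polynomial in $\lambda$ over $\CC[a_{i_1\cdots i_m}]$) places $r$ in $\CC[a_{i_1\cdots i_m}][\lambda]$, and then the identity specializes to give $\phi_A\mid\Psi$ for every $A$.

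I expect the genuinely fiddly part to be, as usual, the bookkeeping rather than any deep obstruction. There are three items to handle with care. One is the case of odd $m$, where eliminating $x$ from $Ax^{m-1}=\lambda x,\ x\cdot x=1$ yields $\phi_A(\lambda^2)$ and normalized eigenvalues occur in pairs $\pm\mu$; here I would observe that $g_{-\lambda}(x)=-g_\lambda(-x)$ when $m$ is odd, so $\Psi(\lambda)$ is even in $\lambda$ up to sign, and applying the argument above to both members of each pair $\pm\mu$ shows $\phi_A(\lambda^2)$, hence $\phi_A$ itself after substituting $\lambda^2\mapsto\lambda$, divides the discriminant. The second is making sure every root of $\phi_A$ is an \emph{honest} normalized eigenvalue with an eigenvector satisfying $x\cdot x=1$ (not a spurious value thrown in by elimination), which is exactly what Corollary~\ref{cor:charpol} combined with Theorem~\ref{thm:count} supplies for generic $A$. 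The third is the legitimacy of the specialization from $K[\lambda]$ to $\CC[a_{i_1\cdots i_m}][\lambda]$, which is routine via Gauss's lemma once primitivity is noted; if $\Psi$ happens to vanish identically for some special $A$ the statement is trivially true.
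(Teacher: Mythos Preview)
Your argument is essentially the one the paper has in mind: the Corollary is stated there without proof, as an immediate consequence of the preceding Proposition (a singular point of the affine hypersurface (\ref{eqn:eig-char}) gives, via homogenization and Euler's identity exactly as you describe, a singular point of its projective closure, so the discriminant vanishes at every normalized eigenvalue). Your care in passing from ``same root set'' to ``divides'' by working over the function field and then specializing via Gauss's lemma is the right way to make the implicit argument rigorous.

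One point to tighten. The Corollary lives in the symmetric-tensor section, so the ambient function field should be $K=\CC(\text{coefficients of }f)$, not the full $\CC(a_{i_1\cdots i_m})$ with $n^m$ independent indeterminates. Corollary~\ref{cor:charpol} asserts irreducibility of $\phi_A$ for a generic (not necessarily symmetric) tensor, and irreducibility is not in general preserved under the specialization that imposes the symmetry relations $a_{i_1\cdots i_m}=a_{\sigma(i_1)\cdots\sigma(i_m)}$. Fortunately you do not need irreducibility: squarefreeness of $\phi_A$ over $K$ is enough, and that follows from Example~\ref{ex:diagonal} (a symmetric tensor whose normalized eigenvalues are generically distinct) together with Theorem~\ref{thm:finite-norm-eig}. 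With $\phi_A$ squarefree and each of its roots a root of $\Psi$, you get $\phi_A\mid\Psi$ over $\overline K$, hence over $K$, and your Gauss/specialization step then goes through unchanged. Your handling of the odd-$m$ parity and the exclusion of $\lambda=0$ is fine.
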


Here we mean the classical multivariate discriminant \cite{gkz} of
an inhomogeneous polynomial of degree $m$ in $n$ variables $x$
evaluated at (\ref{eqn:eig-char}), where $\lambda$ is regarded
as a parameter. Besides the characteristic polynomial $\phi_A(\lambda)$, this
discriminant may contain other irreducible factors.

\begin{example}[\em Discriminantal representation of the characteristic polynomial
of a symmetric tensor]
If $n=2$ and $m=3$ then the discriminant at
bivariate cubic in  (\ref{eqn:eig-char}) equals
 $\,\lambda^4 \cdot \phi_A(\lambda)$.
If $n=2$ and $m=4$ then we evaluate the discriminant
of the ternary quartic using Sylvester's formula \cite[\S 3.4.D]{gkz}.
The output has the discriminant of
binary quartic as an extraneous factor:
$$ \hbox{\rm Discriminant of (\ref{eqn:eig-char})} \quad = \quad
 \bigl(\phi_A(\lambda) \bigr)^2  \cdot \lambda^9 \cdot
 \hbox{\rm Discriminant of $f(x)$}
 $$
 It would be interesting to determine the analogous factorization
 for arbitrary $m$ and $n$. \qed
\end{example}

\smallskip

The subject of this paper is the number of normalized eigenvalues of a tensor.
In Section~2 we gave an upper bound for that number under the
hypothesis that the number is finite. Remarkably, this 
hypothesis is not needed if we restrict our attention to 
symmetric tensors.

\begin{theorem}\label{thm:finite-norm-eig}
Every symmetric tensor $A$ has at most
$((m-1)^n-1)/(m-2)$ distinct normalized eigenvalues.
This bound is attained for generic symmetric tensors $A$.
\end{theorem}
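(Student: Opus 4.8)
The plan is to exploit the additional structure available when $A$ is symmetric. Write $f(x)$ for the degree-$m$ form associated to $A$, so that $Ax^{m-1}=\nabla f(x)$ and $Ax^{m}=x\cdot\nabla f(x)=mf(x)$. A normalized eigenpair $(\lambda,x)$ is then precisely a Lagrange critical point of $f$ on the quadric $\{x\cdot x=1\}$, and at such a point the eigenvalue is forced by the eigenvector:
\[
\lambda\;=\;\lambda\,(x\cdot x)\;=\;x\cdot Ax^{m-1}\;=\;mf(x).
\]
I would use this rigidity to control the eigenvalue even when the eigenscheme is positive-dimensional. Let $Z\subseteq X=\PP(1,\dots,1,m-2)$ be the subscheme cut out by $Ax^{m-1}=\lambda x$, as in the proof of Theorem~\ref{thm:count}, so that $Z$ is a complete intersection of $n$ divisors of class $D$. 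On the dense open set $Z\setminus\{x\cdot x=0\}$ introduce the $\CC^{\ast}$-invariant regular function $\mu=\lambda/(x\cdot x)^{(m-2)/2}$ when $m$ is even, and $\mu^{2}=\lambda^{2}/(x\cdot x)^{m-2}$ when $m$ is odd. The values of $\mu$ (resp.\ of $\mu^{2}$) are exactly the normalized eigenvalues of $A$ (resp.\ their squares); for odd $m$ the points $(x:\lambda)$ and $(-x:-\lambda)$ coincide in $X$, so the quantity that is naturally bounded for odd $m$ is the number of normalized eigenvalues \emph{up to sign}, consistently with $\deg\phi_{A}$.

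The crux is the claim that \textbf{$\mu$ (resp.\ $\mu^{2}$) is constant on every irreducible component of $Z$.} I would prove this on the smooth locus of a component $C$ not contained in $\{x\cdot x=0\}$, after choosing representatives normalized by $x\cdot x=1$ — on which locus $\mu=mf(x)$ — by checking that the differential of $mf$ vanishes along $C$: any tangent vector $v$ to the normalized slice of $C$ satisfies $x\cdot v=0$ by tangency to the sphere, and hence
\[
d(mf)(v)\;=\;m\,\nabla f(x)\cdot v\;=\;m\lambda\,(x\cdot v)\;=\;0,
\]
so $mf$, having vanishing differential on the irreducible variety $C$, is constant there. For odd $m$ the slice $x\cdot x=1$ is a double cover of $C\setminus\{x\cdot x=0\}$ whose two sheets are exchanged by $x\mapsto-x$, under which $f$ changes sign; so it is $\mu^{2}=(mf)^{2}$ that descends to $C$ and is constant. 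Carrying out this differential computation carefully — together with the bookkeeping relating points of $X$, the sign ambiguity for odd $m$, and genuine normalized eigenvalues — is the step I expect to be the main obstacle.

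Granting the claim, the rest is short. Since $Z$ is Noetherian it has finitely many irreducible components, so $\mu$ (resp.\ $\mu^{2}$) takes finitely many values; every normalized eigenvalue $\lambda_{j}$ arises from the point $q_{j}=(x_{j}:\lambda_{j})\in Z$ with $x_{j}\cdot x_{j}=1$ (namely $\mu(q_{j})=\lambda_{j}$ for even $m$, and $\mu^{2}(q_{j})=\lambda_{j}^{2}$ for odd $m$), a point distinct from the singular point $e_{n+1}$. Hence the set of normalized eigenvalues — up to sign for odd $m$ — is finite, which already sharpens Proposition~\ref{prop:normalized-eigen} in the symmetric case. Moreover, if $q_{j}$ and $q_{j'}$ lay on a common component $C$ of $Z$, constancy of $\mu$ (resp.\ $\mu^{2}$) on $C$ would force $\lambda_{j}=\lambda_{j'}$ (resp.\ $\lambda_{j}=\pm\lambda_{j'}$); so the number of these values is at most the number of irreducible components of $Z$ other than $e_{n+1}$. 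I would bound the latter by reusing the intersection computation of Theorem~\ref{thm:count}: there $D^{n}=(m-1)^{n}/(m-2)$, the component $\{e_{n+1}\}$ contributes exactly $1/(m-2)$, and every other irreducible component of the complete intersection $Z$ contributes at least its $H$-degree, hence at least $1$ (positivity of the contributions of the distinguished varieties of an intersection; see \cite[Ch.~12]{fulton-intersection}). Thus there are at most $D^{n}-\tfrac{1}{m-2}=\big((m-1)^{n}-1\big)/(m-2)$ of them, which is the claimed bound.

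For sharpness, the diagonal tensor of Example~\ref{ex:diagonal} with generic nonzero diagonal entries is symmetric, and was shown there to have $\big((m-1)^{n}-1\big)/(m-2)$ distinct normalized eigenvalues. Since attaining the maximal number of distinct normalized eigenvalues is a Zariski-open condition on the space of symmetric tensors — nonvanishing of the discriminant of $\phi_{A}$, together with $\phi_{A}$ having full degree — and this example satisfies it, the bound is attained for generic symmetric~$A$.
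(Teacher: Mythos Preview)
Your argument is correct and shares its skeleton with the paper's: both rest on the observation that normalized eigenpairs of a symmetric tensor are exactly the critical points of $f|_S$ on the quadric $S=\{x\cdot x=1\}$, with $\lambda=mf(x)$ there, and both bound the number of components of the eigenpair locus $Z\subset X$ by the intersection computation of Theorem~\ref{thm:count}. The execution differs in one place. The paper obtains finiteness of normalized eigenvalues by citing generic smoothness (equivalently Sard's theorem): a polynomial map on a smooth variety has only finitely many critical values. It then concludes, somewhat tersely, that the bound on connected components caps this finite set. You instead prove \emph{directly} that $\mu$ is constant on each irreducible component of $Z$ via the tangent computation $d(mf)(v)=m\,\nabla f(x)\cdot v=m\lambda\,(x\cdot v)=0$; this is essentially the proof of the relevant Sard statement unpacked in the algebraic setting. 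Your route has the merit of making the passage from ``finitely many components'' to ``at most that many eigenvalues'' completely explicit, and of surfacing the sign ambiguity for odd~$m$ (which the paper absorbs into the convention that $\phi_A$ is a polynomial in $\lambda^2$). The paper's route is shorter because it outsources that step to a citation. The use of irreducible rather than connected components is harmless: the positivity input from \cite{fulton-intersection} that each contributes at least one to $D^n$ holds in either formulation.
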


\begin{proof}
It suffices to show that the number of normalized eigenvalues of
\underbar{every} symmetric tensor $A$ is finite. 
Recall from the proof of Theorem~\ref{thm:count} that the set of eigenpairs is
the intersection of $n$ linearly equivalent divisors on a
weighted projective space. Since these divisors are
ample, each connected component of the set of eigenpairs contributes at least
one to the intersection number. Therefore, the number of connected
components of eigenpairs can be no more than $((m-1)^n - 1)/(m-2)$. We conclude
that the number of normalized eigenvalues of $A$, if finite, must be bounded
above by that quantity as well.  Finally, Example~\ref{ex:diagonal} shows that
the bound is tight.  

We now prove that the number of normalized eigenvalues of a symmetric tensor $A$
is finite.
Let $S$ be the affine hypersurface in $\CC^n$ defined by
the equation $x_1^2 + \cdots + x_n^2 = 1$.
We claim  that a point $x \in S$ is an eigenvector of~$A$ if and
only if $x$ is a critical point of $f$ restricted to~$S$, in which case, the
corresponding eigenvalue $\lambda$ equals $\frac{1}{m}f(x)$.
By definition, a point $x \in S$ is  a critical point of $f \vert_S$ if and only
if the gradient $\nabla (f \vert_S)$ is zero at~$x$. The latter condition is
equivalent
to the gradient $\nabla f$ being a multiple of $\nabla(x_1^2 + \cdots + x_n^2 -
1) = 2x$. This is exactly the definition of an eigenvector.
Finally, if $x\in S$ is a critical point of $f \vert_S$, then $mf(x) = x \cdot \nabla f(x) = \lambda x
\cdot x = \lambda$, and hence $\,\lambda = \frac{1}{m}f(x)$.

Finally, to prove Theorem~\ref{thm:finite-norm-eig}, we note that, by generic
smoothness~\cite[Cor. III.10.7]{Hartshorne}, a polynomial function on a smooth 
variety has only finitely many critical values. 
Equivalently, Sard's theorem
in differential geometry says that the set of critical values of a
differentiable function has measure zero, so, by
Proposition~\ref{prop:normalized-eigen}, that set must be finite.
\end{proof}

We note two subtleties about Theorem~\ref{thm:finite-norm-eig}. First, it does
not imply that the characteristic polynomial of every symmetric tensor is
non-trivial. Second, the result is intrinsically tied to the normalization $x
\cdot x = 1$. We begin with an example of the first.

\begin{example}
Let $A$ be the symmetric $2 \times 2 \times 2$ tensor with
\begin{equation*}
a_{111} = -2i \,,\quad a_{112} = a_{121} = a_{211} = 1 \,,\quad a_{122} = a_{212} =
a_{221} = 0 \,, \quad a_{222} = 1.
\end{equation*}
Then, up to equivalence, the only eigenvectors are $(0, 1)$ with eigenvalue~$1$
and
$(1, i)$ with eigenvalue~$0$. Note that the second
cannot be rescaled to be a normalized eigenvector, so the only normalized
eigenvalue is~$1$.
However, the characteristic polynomial of~$A$ is identically zero. The reason is
that, for a small perturbation of $A$, the perturbation of the
eigenvector~$(1,i)$ can take on any given normalized eigenvalue. \qed
\end{example}

No analogue of Theorem~\ref{thm:finite-norm-eig} is possible with the
alternative normalization of requiring $x \cdot \overline x = 1$. In this case,
each equivalence class yields infinitely many eigenvalues, which nonetheless
have the same magnitude. However, the following example shows that the
magnitudes of the eigenvalues with $x \cdot \overline x = 1$ may still be an
infinite set.

\begin{example}
Let $A$ be the symmetric $3 {\times} 3 {\times} 3$ tensor whose non-zero entries are
\begin{equation*}
a_{111} = 2 \quad \hbox{and} \quad a_{122} = a_{212} = a_{221} 
\,=\, a_{133} = a_{313} = a_{331} \,=\, 1.
\end{equation*}
The eigenpairs of $A$ are the solutions to the equations
\begin{align*}
2 x_1^2 + x_2^2 + x_3^2 &\,=\, \lambda x_1, \\
2 x_1 x_2 &\,=\, \lambda x_2, \\
2 x_1 x_3 &\,=\, \lambda x_3.
\end{align*}
For any $\alpha \in \CC$, the vector $\,x = (1, i \alpha, \alpha)$
is an eigenvector with eigenvalue $\lambda = 2$. Rescaling,
$x/\sqrt{x \cdot \overline x}$ is an eigenvector with unit length and eigenvalue
\begin{equation*}
\frac{2}{\sqrt{1 + 2 \lvert \alpha \rvert}}.
\end{equation*}
The magnitude of this eigenvalue can be any real number in the interval
$(0,2\,]$.  Note that the  family of eigenvectors above all have $x \cdot x =
1$, so $\lambda = 2$ is the only normalized eigenvalue. \qed
\end{example}

One application of eigenvalues of symmetric tensors is 
that these can be used to decide whether a  polynomial $f$ is
{\em positive semidefinite}, i.e., whether $f(x) \geq 0$ for all $x \in \RR^n$.

\begin{example}\label{ex:motzkin}
The {\em Motzkin polynomial} $f(x,y,z) = z^6 + x^4 y^2 + x^2 y^4 - 3 x^2 y^2 z^2$ is a
well-known example of a positive semidefinite polynomial which cannot be written
as a sum of squares. Let $A$ be the corresponding 
$3 {\times} 3 {\times} 3 {\times} 3 {\times} 3 {\times} 3 $-tensor.
This tensor has $25$ eigenvalues, counting multiplicities,  six less
than our upper bound of $31$. Disregarding multiplicities, there are only
four distinct eigenvalues. All four are real
and they are equal to: $0$ (with multiplicity~$14$), $3/32$ (with multiplicity~$8$),
$3/2$ (with multiplicity~$2$), and $6$ (with multiplicity~$1$).
By~\cite[Theorem 5(a)]{Qi-sym}, this confirms the  fact 
that the Motzkin polynomial $f$ is positive semidefinite. \qed
\end{example}

\bigskip

{\bf Acknowledgments.}
We thank Tamara Kolda for inspiring this project, with a question she asked us
at the {\em Berkeley Optimization Day} on March 6, 2010.
Both authors were supported in part by the National Science Foundation
(DMS-0456960 and DMS-0757207).

\bigskip

\medskip
\end{document}